\DeclareMathOperator*{\argmin}{arg\,min}
\def\BState{\State\hskip-\ALG@thistlm}
\begin{document}

\title{Black-box optimization on hyper-rectangle using Recursive Modified Pattern Search and application to ROC-based Classification Problem}

\titlerunning{Recursive Modified Pattern Search}        

\author{Priyam Das}


\institute{Priyam Das \at
              Department of Biostatistics, Virginia Commonwealth University, Richmond, VA, USA \\
               Tel.: +1 919-308-5892\\
              \email{dasp4@vcu.edu}}

\date{Received: date / Accepted: date}

\maketitle

\begin{abstract}
In statistics, it is common to encounter multi-modal and non-smooth likelihood (or objective function) maximization problems, where the parameters have known upper and lower bounds. This paper proposes a novel derivative-free global optimization technique that can be used to solve those problems even when the objective function is not known explicitly or its derivatives are difficult or expensive to obtain. The technique is based on the pattern search algorithm, which has been shown to be effective for black-box optimization problems. The proposed algorithm works by iteratively generating new solutions from the current solution. The new solutions are generated by making movements along the coordinate axes of the constrained sample space. Before making a jump from the current solution to a new solution, the objective function is evaluated at several neighborhood points around the current solution. The best solution point is then chosen based on the objective function values at those points. Parallel threading can be used to make the algorithm more scalable. The performance of the proposed method is evaluated by optimizing up to 5000-dimensional multi-modal benchmark functions. The proposed algorithm is shown to be up to 40 and 368 times faster than genetic algorithm (GA) and simulated annealing (SA), respectively. The proposed method is also used to estimate the optimal biomarker combination from Alzheimer's disease data by maximizing the empirical estimates of the area under the receiver operating characteristic curve (AUC), outperforming the contextual popular alternative, known as step-down algorithm.
\keywords{Non-convex optimization \and Blackbox optimization \and pattern search \and AUC \and multi-modal objective function}
\end{abstract}
\newpage
\section{Introduction}
\label{sec1}
In science and engineering, a black box is a system whose inner workings are not known or cannot be easily determined. It can be characterized solely by its input and output data. In operations research and machine learning, a black-box function is a function whose internal structure is unknown and cannot be easily inferred from its input and output data. These functions are often characterized by presence of multiple local extremums, non-analytical derivatives, and discontinuities. While optimizing an objective function with a complex mathematical structure and a large number of variables, it is often common practice to treat the function as a black box. This is because the uncertainty about the continuity and smoothness of the function can limit the applicability of convex optimization techniques. Additionally, the multi-modal nature of the function can make it unsuitable for convex optimization techniques.

`Gradient descent (GD)' (\cite{Marquardt1963}) is a well-established method for convex optimization. Other convex optimization techniques include the `Trust Region Reflective (TRF)' algorithm (\cite{Byrd2000}), `Interior-point (IP)' algorithm (\cite{Potra2000})  and `Sequential Quadratic Programming (SQP)' algorithm (\cite{Boggs1996}). Most of these algorithms (e.g., GD, TRF, SQP) use derivatives to find the best direction of movement in the sample space while optimizing an objective function. While minimizing an objective function using convex optimization techniques, the focus remains on finding a local solution. Although these algorithms are efficient for minimizing convex problems, they can get stuck at local solutions while minimizing non-convex functions. For low-dimensional non-convex minimization problems, it is common to use convex optimization techniques starting from multiple starting points and select the best solution from the set of obtained solutions. However, this strategy can be computationally expensive for high-dimensional problems. Some optimization algorithms (e.g., \cite{Goodner2012}) prioritize minimizing the number of function evaluations required to find a reasonable local minimum. The proposed strategy may accelerate the algorithm's execution time, but it may not be suitable for applications that require a high-quality global solution rather than a fast execution time.

Over the past few decades, several heuristic algorithms have been proposed for optimizing black-box functions. These algorithms typically incorporate principles of exploration and avoidance of derivative-based movements. One of the earliest and most popular black-box optimization techniques is the Genetic algorithm (GA) \cite{Fraser1957}, which was later popularized by \cite{Bethke1980,Goldberg1989}. Another popular heuristic algorithm is simulated annealing (SA) \cite{Kirkpatrick1983}) which is widely used in the field of engineering (\cite{Granville1994}). Particle swarm optimization (PSO) is another commonly used heuristic algorithm which was proposed by \cite{Kennedy1995}. These heuristic algorithms typically incorporate a balance between exploration and exploitation. Exploration refers to the search for new and potentially better solutions, while exploitation refers to the refinement of promising solutions. Once it gets stuck to any local minimum, further exploration efforts are typically made to find better solution following their corresponding heuristic principles. In general, derivative-based movements are avoided in these algorithms, as the computation of derivatives of blackbox functions can be expensive, if not intractable. Thus the sense of exploration and avoiding usage of derivatives are two notable aspects conceived by any typical blackbox techniques unlike convex optimization approaches.  While these heuristic algorithms have been shown to be effective for a wide range of blackbox optimization problems, they can still face challenges in high-dimensional spaces., e.g., GA might not scale well with complexity as in higher dimensional optimization problems since there is often an exponential increase in search space size (\cite{Geris2012}).

A common and effective approach to global optimization without using derivatives is to search along the coordinates. A sequence of coordinate search-based global optimization techniques (e.g., \cite{Huyer1999}) have been developed over time, and one of the simplest and most effective is the coordinate search algorithm proposed by Fermi (\cite{Fermi1952}) in 1952. The algorithm works by first setting a step size at the beginning of each iteration. Then, the step size is added and subtracted from each coordinate of the current solution, one at a time. This results in $2n$ new points being explored in the neighborhood of the current solution in a $n$-dimensional parameter space. The objective function value is evaluated at each of these points, and the point with the minimum objective function value is taken as the updated solution. If no improvement is observed after an iteration, the step size is halved and the search is repeated. This allows for a finer search of the parameter space. \cite{Torczon1997} extended and generalized the Fermi's principle for a number of black-box optimization problems to propose the Generalized pattern search (GPS). Pattern Search (PS) algorithms are a class of optimization algorithms that search for the minimum of an objective function by evaluating the function value at a few points in the neighborhood of the current solution. These points are obtained by making coordinate-wise movements, or changes to one variable at a time. The best point in the neighborhood is then selected as the new current solution. The performance of PS algorithms depends on the way the sizes of the coordinate-wise movements are chosen. Later several articles evolved focusing on optimization using Direct Search (DS) which can be considered as a variant of PS (\cite{Audet2006,Custodio2015,Martinez2013,Kolda2003,Lewis1999}). A few other derivative-free optimization methods have been proposed in \cite{Audet2014,Conn2009,Jones1998,Digabel2011,Martelli2014,Audet2008,Audet22008}. 

A plethora of black-box optimization algorithms have been developed in recent years. However, only a small number of these algorithms can handle constrained black-box problems. This is because unconstrained black-box optimization algorithms may evaluate the objective function at points that are infeasible, i.e., outside the constrained domain. This can lead to wasted computational resources and sub-optimal solutions. In statistics, it is common for the parameters to be bounded by known upper and lower bounds. Even if the parameters are not theoretically bounded, it can be helpful to place reasonable bounds on them based on prior knowledge. This can help to reduce the search space and improve the efficiency of the optimization algorithm. An algorithm specifically tailored to optimize any objective function over any desired bound on the coordinates can be beneficial in two ways: (1) it can reduce the search space by incorporating domain knowledge and prior data, and (2) it can conduct a more thorough search over the desired region by considering the coordinate-wise lower and upper bounds. 

\begin{figure}[ht]
	\centering
	\includegraphics[width=0.8\textwidth]{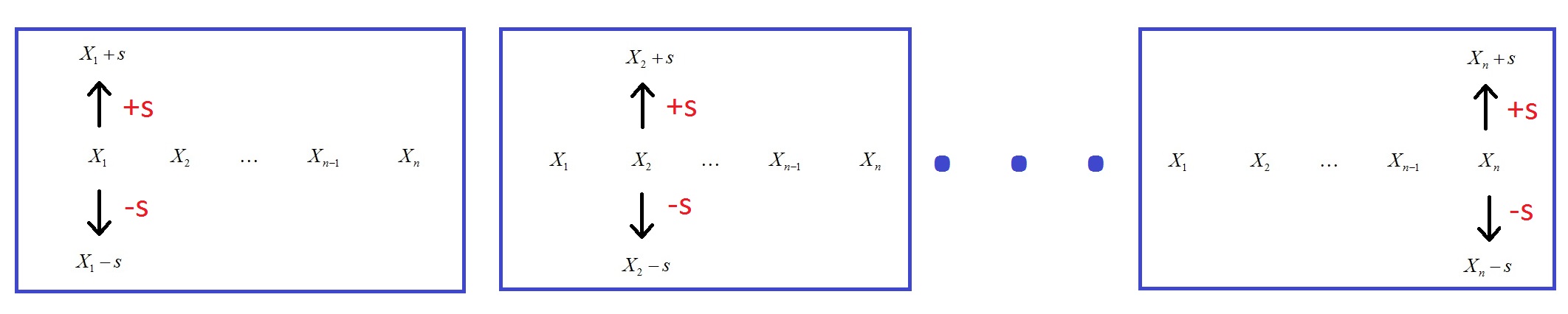}
	\caption{Fermi's principle : Possible $2n$ movements starting from initial point $(x_1,\ldots,x_n)$ inside an iteration with fixed step-size $s$, while optimizing any $n$-dimensional objective function.}
	\label{fermi}
\end{figure}

The proposed algorithm, similar to the Fermi's principle (\cite{Fermi1952}, see Figure \ref{fermi}), evaluates the objective function at $2n$ neighboring points at each iteration. These points are obtained by making $2n$ coordinate-wise movements with derived step sizes, where n is the dimension of the parameter space. Unlike the Fermi's principle and GPS, RMPS is specifically designed for optimizing blackbox functions over hyper-rectangular domains. Appendix A details the other differences between RMPS and existing PS techniques. As is typical for blackbox optimization algorithms, RMPS includes a strategy for jumping out of local minima in search of better solutions. This strategy, called the `restart' strategy, is discussed as follows. The step size values for coordinate-wise movements in RMPS can be used as an indicator of whether a local minimum has been found. (for details see Section \ref{sec_theory}). When the magnitude of all movement steps falls below a user-defined threshold, the step size is reset to a higher value in order to explore a more distant neighborhood for a better solution. We refer to this strategy as `restart', which, to the best of our knowledge, has not been previously considered in the field of PS algorithms. The algorithm terminates when the difference between two consecutive `restart' solutions is less than a user-defined threshold. One of the algorithm's key advantages is that it can evaluate objective functions in parallel in $2n$ directions within each iteration. This is because the step size for an iteration is fixed, and the movements and objective function value evaluation steps in $2n$ directions are independent of each other.

The rest of the paper is organized as follows. Section \ref{sec_algo} provides the detailed description of RMPS. In Section \ref{sec_theory}, we discuss the theoretical properties of RMPS. Section \ref{sec_app} presents a comparative performance analysis of RMPS and several other existing methods, based on their optimization of several moderate and high-dimensional challenging non-convex benchmark functions In Section \ref{sec_HUM}, the RMPS is applied to find the optimal linear combination of the biomarkers for differentiating the Alzheimer's patients of different severity levels. A brief concluding remarks regarding RMPS is discussed in Section \ref{sec_discussion}.

\section{Recursive Modified Pattern Search (RMPS)}
\label{sec_algo}
Consider an objective function $Y = f(\boldsymbol{x})$ where $\mathbf{x} = (x_1,\cdots,x_n)$ is the parameter of dimension $n$. Proposed optimization problem is as follows:
\begin{align*}
	minimize \;:& \; f(\mathbf{x}) \nonumber \\
	subject \; to \; :& \; \mathbf{x} \in S \subset \mathrm{R}^n 
\end{align*}
where $S=\prod_{j=1}^n I_j$, $I_j=[a_j,b_j]$ are closed and bounded intervals on $\mathrm{R}$ for $i=1,\cdots,n$. Now consider the bijection map
\begin{align*}
	g:S \mapsto [0,1]^n
\end{align*}
where $g(\mathbf{z}) = (g_1(\mathbf{z}), \ldots, g_n(\mathbf{z})) \in [0,1]^n$ is such that $g_i(\mathbf{z}) = \frac{z_i-a_i}{b_i-a_i}$. So, without loss of generality, the domain of $\mathbf{x}$ can be taken to be $S = [0,1]^n$. Hence, the problem of minimizing a function over a hyper-rectangular domain is the same as the problem of minimizing a function over a unit cube of the same dimension. In the rest of this paper, for the sake of convenience, the domain of the parameter space is considered to be a unit cube of dimension $n$ .

\paragraph{\textit{Runs} :}
RMPS consists of several \textit{runs}. Each \textit{run} is given by a sequence of iterations and a \textit{run} is terminated based on some convergence criteria (details follows). After iteration step, typically a better solution point might be found. Although, it is also possible to fail to obtain a better solution after an iteration. Upon termination of a \textit{run}, the solution obtained at the final iteration of that \textit{run} is returned. The initial \textit{run} is initialized with the user-provided starting point. Subsequent \textit{run}s start with the solution returned by the previous \textit{run}. For example, the fourth \textit{run} would start with the solution returned by the third \textit{run}. Therefore, the user must specify the starting point for the initial \textit{run} only (see Figure \ref{flowchart}). Starting from the solution returned by the previous \textit{run}, the following \textit{run} tries to improve the solution by making coordinate-wise changes to the current solution. The step sizes for these changes are typically either the same or smaller than the step sizes used in the previous iteration. The sequence of step sizes gradually decreases towards zero as more iterations are executed within any particular \textit{run}. Thus the solution quality improves or remains the same after each \textit{run}. If two consecutive \textit{runs} return the same solution (up to some approximation set by the user), the algorithm terminates and returns the solution returned by the last executed \textit{run}, which is the final solution. 

In order to explain the role of the parameters considered in RMPS, based on the location where their values are provided/updated, we divide them into 3 broad categories :
\begin{itemize}
    \item Parameters set at the beginning, before executing the first \textit{run}.
    \item Parameters set at the beginning of \textit{run}, before executing first iteration.
    \item Parameters set within each iteration.
\end{itemize}

\begin{figure}[ht]
	\centering
	\includegraphics[width=0.8\textwidth]{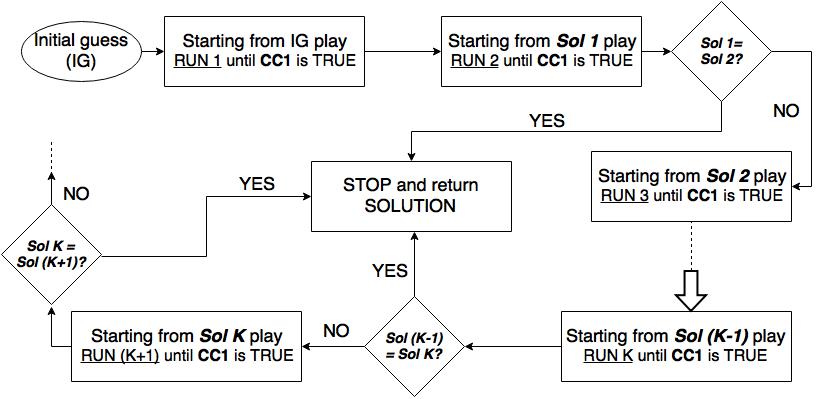}
	\caption{Flowchart of RMPS algorithm. Here CC1 denotes the condition ($|F-min(F,FF)| < tol\_fun$ and $s<\phi$), see Algorithm pseudo-code part in Section \ref{sec_algo} for details.}
	\label{flowchart}
\end{figure}

\paragraph{Parameters set at the beginning, before executing the first \textit{run} :}
The following parameters are set at the beginning of the algorithm, before the first \textit{run} is executed.
\begin{description}
\item[$\bullet$ $\textit{max\_runs}$ :]$\textit{max\_runs}$ denotes the maximum number of \textit{runs} allowed in the algorithm.

	\item[$\bullet$ $\textit{max\_iter}$ :]$\textit{max\_iter}$ denotes the maximum number of iterations allowed inside a \textit{run}. 
    
	\item[$\bullet$ $\textit{tol\_fun}$ :] $\textit{tol\_fun}$ determines the minimum amount of improvement required after an iteration so that the \textit{global step size} (described later) is kept unchanged for next iteration. 
    
	\item[$\bullet$ $\textit{tol\_fun\_2}$ :] Second \textit{run} onward, whenever a \textit{run} terminates, it is checked whether the solution returned by the current \textit{run} is the same or different with the solution returned by the previous \textit{run}, subjected to some approximation. If $\hat{\mathbf{x}}^{(R)}$ and $\hat{\mathbf{x}}^{(R-1)}$ denote the solution obtained by $R$-th and $(R-1)$-th \textit{run}s, then if $||\hat{\mathbf{x}}^{(R)} - \hat{\mathbf{x}}^{(R-1)}||<\textit{tol\_fun\_2}$ holds true, then no further \textit{run} is executed and $\hat{\mathbf{x}}^{(R)}$ is returned as the final solution. Otherwise, next \textit{run} is executed.
\end{description}

\paragraph{Parameters set at the beginning of \textit{run}, before executing first iteration :}
In RMPS each \textit{run} follows similar mechanism, except the values of the tuning parameters can be reset after each \textit{run}. There are three tuning parameters which are initial \textit{global step size} $s_{initial}$, \textit{step decay rate} $\rho$, \textit{step size threshold} $\phi$ respectively. Note that it is not necessary to change their values at each \textit{run}. 
A brief description of these parameters are provided as follows:  
\begin{description}
\item [$\bullet$ initial \textit{global step size} ($s_{initial}$) :] $s_{initial}$ denotes the highest possible size of coordinate-wise jumps. 
	\item[$\bullet$ \textit{step decay rate} ($\rho$) :] $\rho$ determines the rate of change of \textit{global step size} at the end of each iteration. It's value is fixed at the beginning of each \textit{run}. Whenever step-sizes are required to be diminished, the step-size is divided by this quantity, $\rho (>1)$. 

\item[$\bullet$ \textit{step size threshold} ($\phi$) :] This is the lower bound of the all step-sizes (\textit{global step size} and the \textit{local step sizes}; discussed in the next subsection). Once any step size reaches below $\phi$, it is considered that the step-size is sufficiently close to zero.
	
    
\end{description}
\paragraph{Parameters set within each iteration :}
Within each iteration there is a parameter called \textit{global step size} (denoted by $s^{(j)}$ for $j$-th iteration) along with $2n$ local parameters called \textit{local step sizes}, denoted by $\{s_i^+\}_{i=1}^n$ and $\{s_i^-\}_{i=1}^n$. Note that their values are not set by the user. Their values are updated automatically based on the improvement of the objective function value over the iterations.
Further, Appendix B summarizes a brief description on the relationship between the performance of RMPS algorithm and the values of the tuning parameters. 

\paragraph{RMPS mechanism within a \textit{run} :}
Inside a \textit{run}, the first iteration is performed setting the \textit{global step size} $s^{(1)}= s_{initial}$. Inside an iteration, the value of the \textit{global step size} is kept unchanged throughout all the operations. At the beginning of any iteration, the \textit{local step size} $\{s_i^+\}_{i=1}^n$ and $\{s_i^-\}_{i=1}^n$ are set to be equal to the \textit{global step size} of the corresponding iteration. For example, at the beginning of $j$th iteration, we set $s_i^+ = s_i^- = s^{(j)}$ for $i=1,\cdots,n$. 

Suppose the current value of $\mathbf{x}$ at the $j$-th iteration is $\mathbf{x}^{(j)} = (x_1^{(j)}, \ldots, x_n^{(j)})$. If incrementing $x_i^{(j)}$ by  $s_i^+$ generates a point outside the domain, i.e., $(x_i^{(j)} + s_i^+)>1$, then $s_i^+$ is updated to the value $\frac{s^{(j)}}{\rho^f}$ where $f$ is the smallest possible integer such that $x_i^{(j)} + \frac{s^{(j)}}{\rho^f} <1$. Similarly if decrementing $x_i^{(j)}$ by  $s_i^-$ generates a point outside the domain (i.e., $(x_i^{(j)} - s_i^-)<0$), then $s_i^-$ is updated to the value $\frac{s^{(j)}}{\rho^f}$ where $f$ is the smallest possible integer such that $x_i^{(j)} - \frac{s^{(j)}}{\rho^f} >0$. While choosing $f$ for any given coordinate and given direction, it is taken into account that the updated \textit{local step size} is greater than \textit{step size threshold} $\phi$. In case such a $f$ is not feasible (for example, if $1-x_i^{(j)}<\phi$, then no such $f$ exists such that $x_i^{(j)} + \frac{s^{(j)}}{\rho^f} <1$ and $\frac{s^{(j)}}{\rho^f}>\phi$ ), that particular coordinate is not updated in the corresponding direction and is kept unchanged.

After assigning the values of the \textit{local step sizes} in above-mentioned fashion, $2n$ new points are obtained in the neighborhood of the current solution $\mathbf{x}^{(j)}$. Objective functions are evaluated at those points. Out of these $2n+1$ points including $\mathbf{x}^{(j)}$, the point corresponding to lowest objective function value is considered as the updated solution $\mathbf{x}^{(j+1)}$. In case $|f(\mathbf{x}^{(j+1)})-f(\mathbf{x}^{(j+1)})|<\textit{tol\_fun}$ holds, then \textit{global step size} is decremented via being divided by $\rho$, i.e.,  $ s^{(j+1)}$ is set equal to $ s^{(j)}/\rho$. Otherwise, \textit{global step size} is kept unchanged. The motive behind this step is that if the objective function is not improving well for the current value of \textit{global step size}, it is decremented to employ a finer search. Once the value of \textit{global step size} becomes less than \textit{step size threshold} $\phi$, it implies the \textit{global step size} is sufficiently close to zero. Which signifies a local solution is reached (up to some approximation) under certain sets of regularity conditions, as described later in Section \ref{sec_theory}. Therefore, the \textit{run} is terminated upon \textit{global step size} becoming less than $\phi$.

Within an iteration, there is only one \textit{global step size} along with $2n$ \textit{local step sizes} which are initialized at the same value as that of \textit{global step size}. At the end of the iteration, each of them ends up being less than or equal to the \textit{global step size} of that iteration. It should be noted that in a \textit{run}, the \textit{global step size} might decrease or remain same after each iteration. On the other hand, the values of the \textit{local step sizes} do not depend on their values in the previous iteration as it's initial value is set to be equal to \textit{global step size} at the beginning of each iteration. A \textit{run} terminates when \textit{global step size} becomes smaller than $\phi$.

\paragraph{RMPS restart strategy :}
For considering sufficiently small \textit{step size threshold} value, under regularity conditions it can be shown that RMPS returns the global minimum (see Section \ref{sec_theory}). Also the \textit{run} terminates when \textit{global step size} becomes smaller than \textit{step size threshold}. However for non-convex functions, it is hard to verify if that is a global minimum or not. To increase the likelihood of reaching the global minimum, at the end of a \textit{run}, another \textit{run} is initiated setting the \textit{global step size} to initial \textit{global step size} $s_{initial}$. Thus at the beginning of the next \textit{run}, the subsequent set of candidate points are chosen from distant neighborhood and this heuristic approach increases the likelihood of jumping out of the current local solution. Note that, in case it is known that the objective function is convex, there is no need to `restart' since each \textit{run} is designed in such a way that it returns a local solution (see Section \ref{sec_theory}) which is the global solution in case of convex objective function. therefore, execution of only one \textit{run} is sufficient for minimizing a convex function, . 

Regarding this heuristic `restart' strategy, a concern might be raised whether restarting the next \textit{run} from the solution returned by the previous \textit{run} is more justified compared the possible alternative of restarting from different point(s) randomly generated from the parameter space. Firstly, under certain regularity conditions, the solution obtained at the end of a \textit{run} is a local solution and has the potential of being the true global solution as well, which is not necessarily true for any other randomly generated point. Now, another option is to restart from multiple randomly generated points and execute RMPS for each one of them. For smaller number of \textit{run}s, this strategy might be scalable. But as the number of \textit{run}s increase, the set of possible restart points will increase exponentially. Therefore multiple starting point strategy might prove to be computationally infeasible. Further, studies in Section \ref{sec_app} reveal that RMPS yields almost equally good solution starting from different randomly generated points while minimizing high-dimensional benchmark functions. Thus implementing multiple-point `restart' strategy for RMPS seems redundant. 

\paragraph{Algorithm pseudocode :}
Pseudocode for RMPS is given in \textbf{Algorithm 1}. Let $\hat{\mathbf{x}}^{(R)}$ denote the solution obtained at the end of $R$-th \textit{run}, and $\mathbf{x}^{(j)}$ denote the solution obtained after the $j$-th iteration within a particular \textit{run}. Initial guess of the solution $\mathbf{x}^{(1)}$ is provided by the user.

\begin{algorithm}
\caption{Pseudocode for RMPS}\label{euclid}
{\fontsize{7}{7}
\begin{algorithmic}[]
\State $R \gets 1$
\BState \emph{top}:
\State $j \gets 1$
\State $s^{(0)},s^{(1)}  \gets s_{initial}$
\If {$(R = 1)$}
\State $\mathbf{x}^{(0)} \gets \text{Initial guess}$
\Else 
\State $\mathbf{x}^{(0)} \gets \hat{\mathbf{x}}^{(R-1)}$
\EndIf
\While {($j \leq max\_iter$ and $s^{(j)} > \phi$)}
\State $F \gets f(\mathbf{x}^{(j-1)})$
\State $s \gets s^{(j-1)}$
\For {$h = 1:2d$}
\State $i \gets [\frac{(h+1)}{2} ]$ ($[\cdot]$ denotes largest smaller integer function)
\State $ s_h \gets (-1)^hs^{(j)}$
\State $\mathbf{x}_h \gets \mathbf{x}^{(j-1)}$
\State $\mathbf{x}_h(i) \gets \mathbf{x}_h(i)+s_h$
\If {($\mathbf{x}_h(i)>1$)}
\State $f = \big[\log_\rho \frac{s_h}{(1-\mathbf{x}^{(j)}(i))}\big]+1$
\State $s_h \gets s_h/\rho^f$
\EndIf
\If {($\mathbf{x}_h(i)<0$)}
\State $f = \big[\log_\rho \frac{-s_h}{\mathbf{x}^{(j)}(i)}\big]+1$
\State $s_h \gets s_h/\rho^f$
\EndIf
\If {($s_h> \phi$)}
\State $f_h \gets f(\mathbf{x}_h)$
\Else 
\State $f_h \gets F$
\EndIf
\EndFor
\State $\mathbf{B} \gets \argmin_{\mathbf{x}_h} f_h$
\State $FF \gets min(\{f_h\}_{h=1}^{2d})$ 
\If {($FF < F$)}
\State $\mathbf{x}^{(j)} \gets \mathbf{B}$
\Else
\State $\mathbf{x}^{(j)} \gets \mathbf{x}^{(j-1)}$
\EndIf
\If {($j > 1$)}
\If {($|F-min(F,FF)| < tol\_fun$ and $s>\phi$)}
\State $s \gets \frac{s}{\rho}$
\EndIf
\EndIf
\State $s^{(j)} \gets s$
\State $j \gets j+1$
\EndWhile
\State $\hat{\mathbf{x}}^{(R)} \gets \mathbf{x}^{(j)}$, 
\If {$||\hat{\mathbf{x}}^{(R)} - \hat{\mathbf{x}}^{(R-1)}|| < tol\_fun\_2$ }
\State \Return $\hat{\mathbf{x}} = \hat{\mathbf{x}}^{(R)}$ as \textbf{final solution}
\State \textbf{exit}
\Else
\State $R \gets R+1$
\State \textbf{goto} \emph{top}.
\EndIf 
\end{algorithmic}}
\end{algorithm}

\section{Theoretical properties}
\label{sec_theory}
While optimizing any black-box function, finding global solution cannot be possibly guaranteed by using any algorithm. However, under restrictive conditions like convexity, it is desirable for any optimization algorithm to reach the global minimum. In this section we show that under certain regularity conditions, RMPS algorithm reaches a global minimum of the objective function. The following theorem shows that under the assumed regularity conditions, the stopping criteria of RMPS ensures that a global solution (up to some approximation error) is returned.
\begin{theorem}
\label{theorem}
	Suppose $f : \mathrm{R}^n \mapsto \mathbb{R}$ is locally convex and differentiable on a compact set $C \subset \mathrm{R}^n$. Consider a point $\mathbf{u} = (u_1,\ldots, u_n) \in C$. Consider a sequence $\delta_k = \frac{s}{\rho^k}$ for $k\in \mathrm{N}$ and $s>0, \rho>1$. Define $\mathbf{u}_k^{(i+)} = (u_1,\ldots, u_{i-1},u_i+\delta_k,u_{i+1}, \ldots, u_n)$ and $\mathbf{u}_k^{(i-)} = (u_1,\ldots, u_{i-1},u_i-\delta_k,u_{i+1}, \ldots, u_n)$ for $i=1,\cdots,n$. If for all $k \in \mathrm{N}$, $f(\mathbf{u}) \leq f(\mathbf{u}_k^{(i+)})$ and $f(\mathbf{u}) \leq f(\mathbf{u}_k^{(i-)})$ for all $i = 1,\cdots,n$, the global minimum of $f$ on $C$ occurs at $\mathbf{u}$.
\end{theorem}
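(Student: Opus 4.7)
The plan is to reduce the statement to the standard first-order optimality condition for differentiable convex functions on a convex set. The key observation is that the sequence $\delta_k = s/\rho^k$ decreases to $0$, so dividing the hypothesis inequality $f(\mathbf{u}) \leq f(\mathbf{u}_k^{(i+)})$ by $\delta_k>0$ and letting $k\to\infty$ turns a discrete comparison into a one-sided directional derivative condition. Explicitly,
$$\partial_i f(\mathbf{u}) \;=\; \lim_{k\to\infty}\frac{f(u_1,\ldots,u_i+\delta_k,\ldots,u_n)-f(\mathbf{u})}{\delta_k} \;\geq\; 0,$$
whenever $\mathbf{u}_k^{(i+)} \in [0,1]^n$ for all large $k$. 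An entirely analogous limit using the hypothesis on $\mathbf{u}_k^{(i-)}$ yields $\partial_i f(\mathbf{u})\leq 0$.

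First I would split the coordinates of $\mathbf{u}$ into three groups according to position in the box. If $u_i\in(0,1)$, both perturbations lie inside $[0,1]^n$ for all sufficiently large $k$, and the two one-sided inequalities combine to give $\partial_i f(\mathbf{u})=0$. If $u_i=0$, only $\mathbf{u}_k^{(i+)}$ is admissible and one obtains $\partial_i f(\mathbf{u})\geq 0$; symmetrically, $u_i=1$ gives $\partial_i f(\mathbf{u})\leq 0$. Next I would invoke the first-order characterization of convex differentiable functions: for every $\mathbf{v}\in[0,1]^n$,
$$f(\mathbf{v}) \;\geq\; f(\mathbf{u}) + \sum_{i=1}^{n}\partial_i f(\mathbf{u})\,(v_i-u_i).$$

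I would then check that the sum on the right is nonnegative coordinate-by-coordinate. Interior coordinates contribute $0$ since $\partial_i f(\mathbf{u})=0$. When $u_i=0$ we have $v_i-u_i=v_i\geq 0$ and $\partial_i f(\mathbf{u})\geq 0$, so the $i$-th term is nonnegative; when $u_i=1$ we have $v_i-u_i\leq 0$ and $\partial_i f(\mathbf{u})\leq 0$, again nonnegative. Summing gives $f(\mathbf{v})\geq f(\mathbf{u})$ for every $\mathbf{v}\in[0,1]^n$, which is exactly the global minimum conclusion.

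The main obstacle, rather than any heavy calculation, is the correct handling of boundary coordinates: when $u_i\in\{0,1\}$ one of the test points $\mathbf{u}_k^{(i\pm)}$ leaves the domain and $f$ is not even defined there, so the hypothesis must be read as applying only to those perturbed points that remain inside $[0,1]^n$. Once this is understood, the derived one-sided partial-derivative conditions are precisely the KKT conditions of the box-constrained convex program, and the convex gradient inequality closes the proof.
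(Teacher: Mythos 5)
Your proof is correct, but it follows a genuinely different route from the paper's. The paper works with the one-dimensional restriction $g_i(z)=f(u_1,\ldots,u_{i-1},z,u_{i+1},\ldots,u_n)$ on an open interval around $u_i$ and argues by contradiction: if some $u_i^*$ had $g_i(u_i^*)<g_i(u_i)$, convexity of $g_i$ would force $g_i(u_i+\delta_N)<g_i(u_i)$ for a step $\delta_N$ smaller than $|u_i^*-u_i|$, contradicting the hypothesis; hence $u_i$ is a local minimizer of each $g_i$, so $g_i'(u_i)=0$, $\nabla f(\mathbf{u})=0$, and global minimality follows because a local minimum of a convex function is global. That argument requires an open box around $\mathbf{u}$ contained in $[0,1]^n$, i.e.\ it implicitly treats $\mathbf{u}$ as an interior point, a restriction the paper acknowledges in the discussion following the theorem. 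You instead pass directly to the limit of the difference quotients along $\delta_k\to 0$, which yields the sign conditions $\partial_i f(\mathbf{u})\geq 0$ and $\partial_i f(\mathbf{u})\leq 0$ using only differentiability (no convexity and no contradiction), and you invoke convexity exactly once, through the gradient inequality $f(\mathbf{v})\geq f(\mathbf{u})+\sum_i \partial_i f(\mathbf{u})(v_i-u_i)$, checking each term's sign. What your route buys is the boundary case: when $u_i\in\{0,1\}$ the paper's stationarity argument does not apply, whereas your one-sided (KKT-type) conditions still close the proof; what the paper's route buys is that it never needs the multivariate gradient inequality, only elementary one-dimensional convexity plus the local-implies-global fact. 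The caveat you flag — that the hypothesis can only be read as applying to perturbed points lying in $[0,1]^n$ — is the right reading and is in fact also needed (for small $k$, when $\delta_k$ is large) in the interior case treated by the paper.
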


\begin{proof}[Proof of Theorem \ref{theorem}]
	Take an open neighborhood $\mathbf{U} \subset C$ w.r.t. $l_{\infty}$-norm containing $\mathbf{u}$ at the center. So, there exists $r>0$ such that $\mathbf{U} = \prod_{i=1}^nU_i$ where $U_i = (u_i-r,u_i+r)$ for $i=1,\ldots,n$. For some $i \in \{1,\ldots,n\}$, define $g_i :  U_i \mapsto \mathrm{R}$ such that $g_i(z) = f(u_1,\ldots,u_{i-1}, z,u_{i+1}, \ldots u_n)$. Since $f$ is convex on $\mathbf{U}$, it can be easily shown that $g_i$ is also convex on $U_i$. We claim that $g_i(u_i) \leq g_i(z)$ for all $z \in U_i$.
	
	Suppose there exist a point $u_i^* \in U_i$ such that $g_i(u_i^*) < g_i(u_i)$. Take $d = |u_i^* - u_i|$. 
 (see Figure S1 and S2 in the appendix).
 Clearly $0 < d < r$. Without loss of generality, assume $u_i^* > u_i$. Hence $u_i^* = u_i + d$. Since $\delta_k$ is a strictly decreasing sequence going to $0$, there exists a $N$ such that for all $k \geq N$, $\delta_k < d$. Now we have $u_i < u_i + \delta_N < u_i + d$. Now there exists a $\lambda \in (0,1)$ such that $u_i+\delta_N = \lambda u_i + (1-\lambda)(u_i+d)$. From convexity of $g_i$, we have
	\begin{align*}
		f(u_i+\delta_N)  &= f(\lambda u_i + (1-\lambda)(u_i+d))   \\
		& \leq \lambda f(u_i) + (1-\lambda)f(u_i+d) \\
		& = \lambda f(u_i) + (1-\lambda)f(u_i^*) \\
		 &= f(u_i)  - (1 - \lambda)(f(u_i) - f(u_i^*)),
	\end{align*}

	Since, $ f(u_i^*) < f(u_i)$, it implies $f(u_i+\delta_N)<f(u_i)$. But, we know $f(u_i) \leq f(u_i+\delta_N)$. Hence it is a contradiction. 
	
	Since partial derivatives of $f$ exist at $\mathbf{x} = \mathbf{u}$, $g_i$ is differentiable at $z =u_i$. Since $u_i$ is a local minima of $g_i$ in $U_i$, we have $g_i^{\prime}(u_i) = 0$. So we have $\frac{\partial}{\partial x_i}f(\mathbf{x})|_{\mathbf{x} = \mathbf{u}} =0$. By similar argument it can be shown that $\frac{\partial}{\partial x_j}f(\mathbf{x})|_{\mathbf{x} = \mathbf{u}} =0$ for all $j = 1,\ldots, n$. Since $f$ is convex and $\nabla f(\mathbf{u}) = 0$, $\mathbf{u}$ is a local minima in $\mathbf{U}$. Now, since $\mathbf{U} \subset C$, $\mathbf{u}$ is also a local minima of $C$. But $f$ is convex on $C$. Since any local minimum of a convex function is necessarily global minimum, the global minimum of $f$ occurs at $\mathbf{u}$. 
\end{proof}
Suppose the objective function is convex and all the partial derivatives exist at the obtained solution $\mathbf{u}\in C$ which is an interior point. The proposed algorithm terminates when two consecutive \textit{runs} yield the same solution. It implies in the last \textit{run}, the objective function values at all the sites obtained by making jumps of sizes $\delta_k = \frac{s_{initial}}{\rho^k}$ (until $\delta_k$ gets smaller than \textit{step size threshold}) around $\mathbf{u}$, i.e. $f(\mathbf{u}_k^{(i+)})$ and $f(\mathbf{u}_k^{(i-)})$, are greater than or equal to $f(\mathbf{u})$ for $i=1,\ldots,n$. So, taking \textit{step size threshold} sufficiently small in RMPS ensures that the returned solution is the global minimum under the regularity conditions. From Theorem \ref{theorem} it is concluded that if the objective function is convex and differentiable then taking \textit{step size threshold} sufficiently small yields the global minimum. It should be also noted that if the function is convex and differentiable and it takes minimum value at some interior point, evaluation of only the first \textit{run} is sufficient to obtain the solution.

\section{Comparative study on Benchmark functions}
\label{sec_app}
In this section, the comparative performance study of RMPS, Simulated Annealing (SA) and  Genetic Algorithm (GA) is considered. Both SA and GA are available in Matlab R2014a (The Mathworks) via the Optimization Toolbox functions \textit{simulannealbnd} (for SA), and \textit{ga} (for GA) respectively. In the comparative study, the maximum number of allowed iterations and evaluations of objective function is set to be infinity for \textit{simulannealbnd} function. In case of \textit{ga}, the default parameter values are used. RMPS is implemented in Matlab 2014a and the code is made available  \href{https://github.com/priyamdas2/RMPS}{\textbf{here}}. The values of the tuning parameters are taken to be as follows: $\textit{max\_runs}=1000, \textit{max\_iter}=5000,  \textit{tol\_fun} = 10^{-15}, \textit{tol\_fun\_2} = 10^{-6}, \rho_1=2$ (\textit{step decay rate} for the first \textit{run}), $\rho_2=1.05$ (\textit{step decay rate} for second \textit{run} onwards),$\phi=10^{-6}$. We consider 45 benchmark functions (\cite{Jamil2013}) and  each test function is minimized starting from 10 randomly generated points (under 10 random number generating seeds in MATLAB) within the considered domain of solution space using these three above-mentioned methods. The domains of the search regions can be found in Table S1 of the appendix. All simulation studies in this paper are performed in a machine with 64-Bit Windows 8.1, Intel i7 3.60GHz processors and 32GB RAM. In Table \ref{comparison_table_1}, the obtained minimum values for each occasion is noted down for all considered algorithms along with average computation time. It is noted that RMPS and GA perform more or less better than SA. It should be also noted that RMPS, in general, yields reasonable solution in lesser time than GA and SA in most of the cases. Using RMPS upto 9 and 15 folds improvement in computation times are obtained over GA ans SA respectively. Although GA yielded better solutions in some of the cases at the cost of more computation time, it should be noted that in the case of RMPS, taking the value of \textit{step size threshold} ($\phi$) smaller than its default value (i.e., $10^{-6}$), better solution might be obtained with more computation time. Because  smaller value of \textit{step size threshold} would employ finer search (possibly) resulting in more accurate solution.

As mentioned in Section \ref{sec_theory}, to minimize any convex function using RMPS, it is sufficient to perform only one \textit{run}. RMPS can be made even faster for solving convex problem by increasing the \textit{step decay rate} parameter. In appendix C another comparative performance study of RMPS, GA and SA is considered based on minimizing convex objective functions. In Table S2 of the appendix, it is shown that we obtain up to 40 and 92 folds improvement over GA and SA using modified version of RMPS which is specially designed for convex minimization. 

\begin{table}[h]
	\centering
	\resizebox{0.99\columnwidth}{!}{%
		\begin{tabular}{@{}llllllc@{}}
			\toprule
			\multicolumn{1}{c}{Function names} & \multicolumn{1}{c}{\begin{tabular}[c]{@{}c@{}}RMPS\\ (min. value)\end{tabular}} & \multicolumn{1}{c}{\begin{tabular}[c]{@{}c@{}}GA\\ (min. value)\end{tabular}} & \multicolumn{1}{c}{\begin{tabular}[c]{@{}c@{}}SA\\ (min. value)\end{tabular}} & \multicolumn{1}{c}{\begin{tabular}[c]{@{}c@{}}RMPS\\ (avg. time)\end{tabular}} & \multicolumn{1}{c}{\begin{tabular}[c]{@{}c@{}}GA\\ (avg. time)\end{tabular}} & \begin{tabular}[c]{@{}c@{}}SA\\ (avg. time)\end{tabular} \\ \midrule
			Ackley Function & 0.00012 & 4.95E-06 & 2.46E-06 & 0.072 & 0.312 & 0.548 \\
			Bukin Function N6 & 0.101226 & 0.021039 & 0.00986 & 0.053 & 0.255 & 0.505 \\
			Cross-in-Tray Function & -2.06261 & -2.06261 & -2.06257 & 0.063 & 0.156 & 0.404 \\
			Drop-Wave Function & -1 & -1 & -0.93625 & 0.053 & 0.153 & 0.571 \\
			Eggholder Function & -959.641 & -956.918 & -888.949 & 0.062 & 0.256 & 0.505 \\
			Gramacy \& Lee (2012) Function (n=1) & -0.86901 & -0.86901 & -0.86901 & 0.016 & 0.147 & 0.249 \\
			Griewank Function & 2.25E-07 & 0.007396 & 0.007525 & 0.057 & 0.166 & 0.501 \\
			Holder Table Function & -19.2085 & -19.2085 & -19.2085 & 0.074 & 0.163 & 0.51 \\
			Langermann Function & -4.15581 & -4.15581 & -4.15486 & 0.087 & 0.176 & 0.479 \\
			Levy Function & 9.9E-11 & 4.64E-12 & 1.46E-06 & 0.084 & 0.166 & 0.549 \\
			Levy Function N 13 & 1.66E-10 & 4.56E-11 & 7.47E-07 & 0.075 & 0.182 & 0.401 \\
			Rastrigin Function & 8.65E-09 & 4.79E-10 & 1.42E-05 & 0.069 & 0.179 & 0.476 \\
			Schaffer Function N2 & 1.66E-11 & 1.39E-13 & 0.005998 & 0.062 & 0.184 & 0.425 \\
			Schaffer Function N4 & 0.292579 & 0.295289 & 0.298868 & 0.288 & 0.164 & 0.375 \\
			Schwefel Function & 2.55E-05 & 2.55E-05 & 118.4384 & 0.068 & 0.261 & 0.503 \\
			Shubert Function & -186.731 & -186.731 & -186.731 & 0.052 & 0.173 & 0.439 \\
			Bohachevsky Functions 1 & 3.98E-07 & 7.49E-12 & 8.74E-09 & 0.076 & 0.202 & 0.54 \\
			Bohachevsky Functions 2 & 0.218313 & 1.63E-10 & 2.24E-07 & 0.07 & 0.201 & 0.449 \\
			Bohachevsky Functions 3 & 9.01E-08 & 8.55E-11 & 1.65E-06 & 0.062 & 0.209 & 0.471 \\
			Perm Function 0, n, $\beta(=10)$ & 1.42E-08 & 3.14E-09 & 8.68E-08 & 0.057 & 0.207 & 0.472 \\
			Rotated Hyper-Ellipsoid Function & 1.07E-08 & 2.75E-12 & 3.62E-08 & 0.072 & 0.18 & 0.589 \\
			Sphere Function & 4.36E-11 & 1.99E-11 & 7.88E-08 & 0.054 & 0.162 & 0.429 \\
			Sum of Different Powers Function & 8.32E-13 & 2.81E-15 & 1.05E-06 & 0.068 & 0.149 & 0.486 \\
			Sum Squares Function & 6.54E-11 & 5.16E-13 & 5.98E-08 & 0.072 & 0.165 & 0.465 \\
			Trid Function & -2 & -2 & -2 & 0.047 & 0.158 & 0.411 \\
			Booth Function & -5.3E+07 & -5.3E+07 & -5.3E+07 & 0.031 & 0.227 & 0.299 \\
			Matyas Function & 2.27E-11 & 2.18E-13 & 5.98E-06 & 0.062 & 0.183 & 0.457 \\
			McCormick Function & -1.91322 & -1.91322 & -1.91322 & 0.066 & 0.157 & 0.42 \\
			Power Sum Function (n=4) & 2.06E-05 & 0.001365 & 0.000538 & 3.008 & 0.825 & 0.892 \\
			Zakharov Function & 1.13E-10 & 6.55E-12 & 4.5E-07 & 0.071 & 0.173 & 0.497 \\
			Three-Hump Camel Function & 4.61E-11 & 2.92E-11 & 6.57E-07 & 0.067 & 0.167 & 0.499 \\
			Six-Hump Camel Function & -1.03163 & -1.03163 & -1.03163 & 0.07 & 0.17 & 0.469 \\
			Dixon-Price Function & 1.6E-10 & 2.71E-10 & 6.29E-08 & 0.054 & 0.191 & 0.398 \\
			Rosenbrock Function & 6.57E-06 & 0.000567 & 0.000104 & 1.202 & 0.416 & 0.459 \\
			De Jong Function N5 & 0.998004 & 0.998004 & 0.998004 & 0.078 & 0.165 & 0.483 \\
			Easom Function & -1 & -1 & -1.5E-09 & 0.053 & 0.157 & 0.262 \\
			Michalewicz Function & -1.8013 & -1.8013 & -1.8013 & 0.079 & 0.163 & 0.464 \\
			Beale Function & -0.15509 & -0.15509 & -0.15509 & 0.072 & 0.186 & 0.41 \\
			Branin Function & 0.397887 & 0.397887 & 0.397888 & 0.066 & 0.17 & 0.477 \\
			Colville Function (n=4) & -469.313 & -469.313 & -469.313 & 0.204 & 0.251 & 1.177 \\
			Forrester et al. (2008) Function & -6.02074 & -6.02074 & -6.02074 & 0.017 & 0.154 & 0.182 \\
			Goldstein-Price Function & 3 & 3 & 3 & 0.078 & 0.185 & 0.49 \\
			Perm Function n,$\beta(=0.5)$ & 4.04E-11 & 2.34E-10 & 9.22E-07 & 0.067 & 0.24 & 0.359 \\
			Powell Function (n=4) & 4.48E-07 & 2.26E-05 & 5.69E-05 & 0.609 & 0.564 & 0.991 \\
			Styblinski-Tang Function & -78.3323 & -78.3323 & -78.3323 & 0.078 & 0.16 & 0.453 \\ \bottomrule
	\end{tabular}}
	\caption{Comparison of minimum values achieved and the average computation time (in seconds, computed in MATLAB R2014a) for minimizing benchmark functions with RMPS, GA and SA starting from 10 starting points in each cases. The dimension of all the problems are 2 unless the value of the dimension (i.e., $n$) is mentioned with the name of the function. Please refer to the appendix for the domain of search for each function.}
	\label{comparison_table_1}
\end{table}

\subsection{High-dimensional benchmark function optimization}
\label{high_dim}
To compare the performance of RMPS with that of GA and SA, we consider 100 and 1000 dimensional Ackley's function, Griewank function, Rastrigin function, Schwefel function, Sphere function and the Sum of square function (\cite{Jamil2013}). The search domains for minimization of the above-mentioned functions are taken to be as follows $[-5,5]^n$, $[-10,10 ]^n$ , $[-5.12,5.12]^n$, $[-500, 500]^n$, $[-5.12,5.12]^n$ and $[-5.12,5.12]^n$ respectively. Schwefel function attains the global minimum value 0 at $\mathbf{x}=(420.9687,$ $ \ldots, 420.9687)$  within the corresponding domain while rest attain the global minimum value 0 at the origin. While minimizing a particular function on a given domain, 10 distinct randomly generated initial points are considered and that same set of 10 starting points are used for GA, SA and RMPS. For GA and SA, the smallest of the 10 obtained objective values are noted along with average computation times (Table \ref{table_high_dim}). For RMPS, both the maximum and the minimum of the 10 obtained objective function minimums and the average computation times are noted down for each case. In Table \ref{table_high_dim}, it is noted that RMPS generally outperforms GA and SA. For all cases it is observed that the maximum of the minimums of the obtained solutions by RMPS (over 10 different starting point cases) is better than the minimum values obtained for GA and SA. A noticeable improvement using RMPS over GA and SA is visible especially in the case of solving the 1000 dimensional problems. Using RMPS we get up to 32 and 368 folds improvement in computation time over GA (in case of 100-dimensional Sum squares function) and over SA (in case of 1000-dimensional Sum squares function) respectively. It is also noted that the minimum and the maximum of the 10 obtained minimums obtained by RMPS are quite close. It demonstrates that the performance of RMPS, in general, is less dependent on the starting point.

In Figure \ref{Das_benchmark}, a comparative study of RMPS, GA and SA is made based on the improvement of the value of the 1000 dimensional objective functions in first 30 minutes starting from same initial points. For all the functions except Sum squares function, the objective values are plotted in absolute scale, while for Sum squares function, the objective values are plotted in natural $\log$ scale. It is observed that RMPS outperforms GA and SA by a large margin.  

\begin{table}[]
	\centering
	\resizebox{0.99\columnwidth}{!}{%
		\begin{tabular}{@{}lccccccc@{}}
			\toprule
			Functions & \begin{tabular}[c]{@{}c@{}}RMPS\\ (min)\end{tabular} & \begin{tabular}[c]{@{}c@{}}RMPS\\ (max)\end{tabular} & \begin{tabular}[c]{@{}c@{}}GA\\ (min)\end{tabular} & \begin{tabular}[c]{@{}c@{}}SA\\ (min)\end{tabular} & \begin{tabular}[c]{@{}c@{}}RMPS\\ (avg. time)\end{tabular} & \begin{tabular}[c]{@{}c@{}}GA\\ (avg. time)\end{tabular} & \begin{tabular}[c]{@{}c@{}}SA\\ (avg. time)\end{tabular} \\ \midrule
			Ackley's ($n =100$) & 1.03E-05 & 1.17E-05 & 3.75E-04 & 7.87E-00 & 3.5 & 68.1 & 54.0 \\
			Griewank ($n =100$) &  7.06E-11 & 1.17E-05 & 1.00E-03 & 1.45E-00 & 5.3 & 30.0 & 46.3 \\
			Rastrigin ($n =100$) &  3.65E-07 & 4.14E-07  & 4.14E-07 & 5.01E+02 & 20.1 & 62.9 & 51.8 \\
			Schwefel ($n =100$) & 1.27E-03 &  1.27E-03 & 7.86E+03 & 1.93E+04 & 43.7 & 58.9 & 65.9 \\
			Sphere ($n =100$) & 6.93E-10 & 8.91E-10 & 4.49E-04 & 8.02+01 & 1.9 & 47.3 & 49.8 \\
			Sum squares ($n =100$) & 3.45E-08 & 4.62E-08 & 3.83E-04 & 1.63E-00 & 2.2 & 72.2 & 96.2 \\
			Ackley's ($n =1000$) & 1.09E-05 & 1.12E-05 & 4.42E-00 & 9.92E-00 & 534.5 & 5865.7 & 5984.7 \\
			Griewank ($n =1000$) &  9.98E-11 & 1.48E-10 & 2.49E-00 & 7.45E-00 & 655.7 & 2580.3 & 4573.0 \\
			Rastrigin ($n =1000$) &  3.86E-06 & 3.95E-06 & 2.79E+03 & 7.63E+03 &  3938.4 & 6391.2 & 5591.7 \\
			Schwefel ($d =1000$) & 1.27E-02 & 1.27E-02 & 2.15E+05 & 1.95E+05 &  8664.6 & 9646.3 & 9448.8 \\
			Sphere ($n =1000$) & 7.77E-09	& 8.36E-09 & 1.13E+03 & 1.70E+03 &  225.5 & 3971.4 & 7920.2 \\
			Sum squares ($n =1000$) &  3.89E-06 &  4.21E-06 & 1.86E+05 & 1.34E-00 &  256.6 & 4805.3 & 94542.4 \\ \bottomrule
	\end{tabular}}
	\caption{Comparative study of RMPS, GA and SA for minimizing 100 and 1000 dimensional Ackley, Griewank, Rastrigin, Schwefel, Sphere and Sum squares functions. Benchmark functions are minimized starting from 10 randomly generated starting points from the corresponding domains using RMPS, GA and SA. The minimum of the 10 obtained minimum values of the objective functions is noted down for each method. For RMPS, also the maximum of these 10 obtained minimum objective function values is noted down as well. Average computation times over 10 starting point scenarios for each method is noted down (in seconds). RMPS outperforms other methods and the closeness of the min. and max. of the 10 obtained function minimum values (by RMPS, corresponding to 10 different starting points) implies the performance of RMPS is less dependent on the starting point, in general.}
	\label{table_high_dim}
\end{table}

\begin{figure} 
		\centering
		\includegraphics[width=0.99\linewidth]{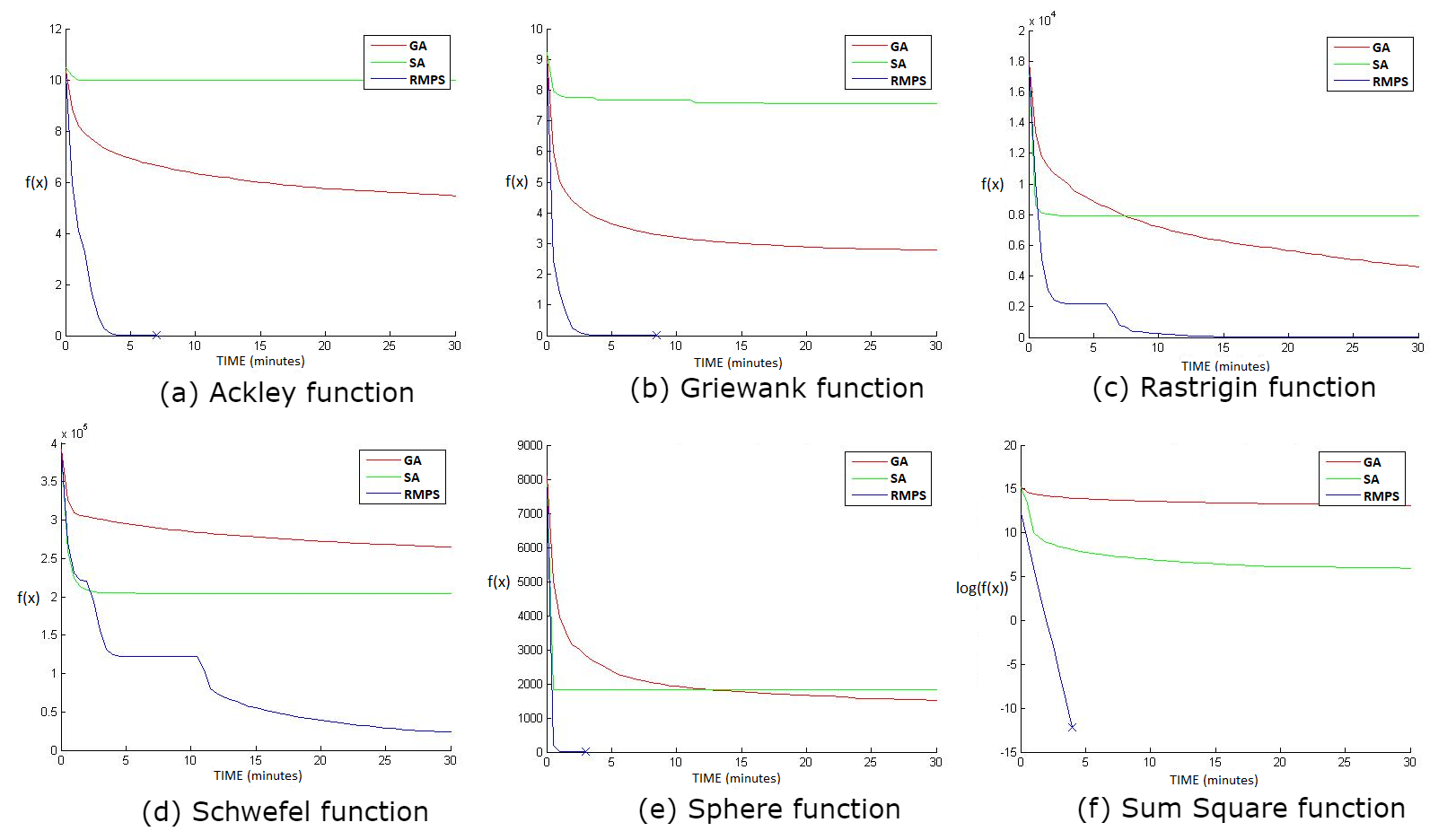} 
		\caption{Comparison of time required for minimizing 1000 dimensionl (a) Ackley (b) Griewank (c) Rastrigin (d) Schwefel (e) Sphere (f) Sum Square functions using Genetic Algorithm (GA), Simulated Annealing (SA) and Recursive Modified Pattern Search (RMPS). Objective function values where noted down every 10 seconds over the period of first 30 minutes. Starting point was generated randomly and same starting point is used for all considered algorithms.} 
		\label{Das_benchmark} 
\end{figure}

\subsection{Boundary point solution cases}
\label{boundary}
It is interesting to gauge the performances of the blackbox optimization methods when the true solution lies at a boundary point of the restricted domain. So we take the domains of Ackley's function, Griewank function, Rastrigin function, Schwefel function, Sphere function and the Sum of square function to be $[0,5]^n$, $[0,10 ]^n$ , $[0,5.12]^n$, $[0, 420.97]^n$, $[0,5.12]^n$ and $[0,5.12]^n$ respectively. Note that, in each case, the true solution is a boundary point. The results (based on 10 randomly generated starting points, similar to the previous cases) are provided in Table S3 of appendix. It is noted that in this case also RMPS generally outperforms GA ans SA. Using RMPS we get up to 40 and 77 folds improvement in computation time over GA (in case of 100-dimensional Sum squares function) and over SA (in case of 1000-dimensional Sum squares function) respectively. Again, it is noted that the best and the worst solutions obtained by RMPS are quite closer which implies that the results obtained by RMPS are less dependent on the starting points, in general.

\subsection{Minimizing 5000 dimensional benchmark functions}
\label{5000}

In Table \ref{table_5000}, we note down the maximum and minimum values of the obtained solutions after minimizing the 5000 dimensional objective functions with RMPS starting from 3 randomly generated initial points. Along with the case where the true global minimum is an interior point on the domain, we also consider the scenario where the true global minimum is a boundary point. The average computation time in each case is also noted down in Table \ref{table_5000}. Both the best and the worst solution cases out of these 3 starting point scenarios are noted. In this case also, it is observed that the obtained best and the worst solutions are quite close to each other, for both boundary and non-boundary cases. For GA and SA, due to unreasonable amount of required computation time in this case scenario, we avoid including their performance results in this paper. 

\begin{table}[]
	\centering
	\centering
	\resizebox{0.99\columnwidth}{!}{%
		\begin{tabular}{@{}ccccccc@{}}
			\toprule
			\multirow{2}{*}{Functions} & \multicolumn{3}{c}{\begin{tabular}[c]{@{}c@{}}Non-Boundary\\ solution case\end{tabular}} & \multicolumn{3}{c}{\begin{tabular}[c]{@{}c@{}}Boundary\\ solution case\end{tabular}} \\ \cmidrule(l){2-7} 
			& \begin{tabular}[c]{@{}c@{}}Min.\\ value\end{tabular} & \begin{tabular}[c]{@{}c@{}}Max.\\ value\end{tabular} & \begin{tabular}[c]{@{}c@{}}Avg. time\\ (minutes)\end{tabular} & \begin{tabular}[c]{@{}c@{}}Min.\\ value\end{tabular} & \begin{tabular}[c]{@{}c@{}}Max.\\ value\end{tabular} & \begin{tabular}[c]{@{}c@{}}Avg. time\\ (minutes)\end{tabular} \\ \midrule
			Ackley's & 1.75E-05 & 1.78E-05 & 697.53 & 1.07E-05 & 1.11E-05 & 681.47 \\
			Griewank & 3.89E-08 & 4.53E-08 & 904.79 & 9.47E-09 & 1.32E-08 & 568.58 \\
			Rastrigin & 7.87E-03 & 7.87E-03 & 417.03 & 3.67E-06 & 3.75E-06 & 720.60 \\
			Schwefel & 3.64E-01 & 3.87E-01 & 5862.39 & 6.37E-02 & 6.37E-02 & 4917.78 \\
			Sphere & 1.01E-05 & 1.01E-05 & 342.13 & 4.02E-08 & 4.11E-08 & 365.64 \\
			Sum squares & 9.79E-05 & 9.94E-05 & 544.64 & 9.91E-05 & 1.03E-04 & 428.12 \\ \bottomrule
	\end{tabular}}
	\caption{Performance evaluation of RMPS, GA and SA for minimizing 5000 dimensional Ackley, Griewank, Rastrigin, Schwefel, Sphere and Sum squares functions. Benchmark functions are minimized starting from 3 randomly generated starting points from the corresponding domains using RMPS, GA and SA. Two scenarios are considered, namely boundary solution case and interior point (non-boundary) solution case. The minimum and the maximum of the 3 obtained minimum values of the objective functions are noted down. Average computation times over 3 starting point scenarios for RMPS is noted down (in minutes). The closeness of the min. and max. of the 3 obtained function minimum values (by RMPS, corresponding to 3 different starting points) implies the performance of RMPS is less dependent on the starting point, in general.}
	\label{table_5000}
\end{table}

\section{Estimating optimal combination of biomarker from Alzheimer's disease data}
\label{sec_HUM}
In precision medicine, the patients are often classified into multiple disease categories based on the values of the biomarkers or pathological test results. Recently, several articles (e.g., \cite{Pepe2000,Pepe2006,Maiti2022}) proposed classifying the subjects into multiple classes using the area under  the receiver operating characteristic (ROC) curve (AUC) as it requires less distributional assumptions compared to a few other comparable approaches. Consider there are $H$ outcome categories and $\{\mathbf{X}_j\}_{j=1}^H$ denote the values of $d$ dimensional biomarkers coming from corresponding classes. Suppose $X_j \sim F_j$ where $F_j$ denotes some continuous multi-variate distribution. Then the linear score obtained by combining the biomarkers for class $j$ is given by $\boldsymbol{\beta}^T\mathbf{X}_j = \sum_{k=1}^d\beta_kX_{jk}$, where $\boldsymbol{\beta}$ denotes the optimal coefficient for combining biomarkers. Assuming higher linear score corresponds to higher disease/class category, the diagnostic accuracy based on thses linear scores can be measured by maximizing the Hypervolume Under Manifold (HUM) given by $D(\boldsymbol{\beta}) = P(\boldsymbol{\beta}^T\mathbf{X}_H> \ldots > \boldsymbol{\beta}^T\mathbf{X}_1)$ (\cite{Li2008}).

Since $D(\boldsymbol{\beta})$ cannot be maximized directly, one alternative to estimate the combination vector is by maximizing it's empirical estimate (\cite{Zhang2011}). Suppose the sample values are given by $\{\mathbf{X}_{ji_j}: j = 1,\ldots, H, i_j = 1,\ldots,n_j\}$, where $n_j$ denotes the sample size of $j$-th class; the total sample size is $n = \sum_{j=1}^Hn_j$. Then the empirical estimate of HUM (EHUM) is given by 
\begin{align*}
 {D}_{E}(\boldsymbol{\beta}) = \dfrac{1}{n_{1}n_{2} \cdots n_{H}} \displaystyle\sum_{i_{1}=1}^{n_{1}}\displaystyle\sum_{i_{2}=1}^{n_{2}} \cdots \displaystyle\sum_{i_{H}=1}^{n_{H}}I(\boldsymbol{\beta}^{T}\mathbf{X}_{i_{H}}>\boldsymbol{\beta}^{T}\mathbf{X}_{i_{(H-1)}}>\cdots>\boldsymbol{\beta}^{T}\mathbf{X}_{i_{1}}).
\end{align*}
However there remains a few computational issues in order to maximize EHUM function as follows. Firstly, evaluation of objective function can be very expensive for higher number of classes and large sample scenarios. Secondly, the EHUM objective function is a sum over step functions which make it discontinuous and not suitable for derivative based optimization techniques. Lastly, the possible multi-modal nature of EHUM makes is even harder to maximize.
To alleviate the computational burden of maximizing EHUM function, \cite{Hsu2016} proposed to obtain the optimal combination vector by maximizing
$$P_{A}(\boldsymbol{\beta}) =\frac{1}{H-1} \displaystyle\sum_{j=1}^{H-1}P(\boldsymbol{\beta}^{T}\mathbf{X}_{j+1}>\boldsymbol{\beta}^{T}\mathbf{X}_{j})$$ (see Section D of the appendix for the detailed formulation). For the rest of the paper, we name it Upper and Lower Bound Approach (ULBA). Compared to EHUM, although ULBA objective function computationally less expensive to evaluate, rest of the challenges for maximizing EHUM (e.g., discontinuity, possible multi-modal nature) also applies for ULBA as well.

The global optimization of discontinuous and (possibly) multi-modal objective function EHUM with respect to $\boldsymbol{\beta}$ becomes more challenging as the number of classes and sample size increase. So instead of estimating the whole optimal combination vector simultaneously, \cite{Pepe2006} proposed Step-down algorithm where, the biomarkers are ordered based on their individual EHUM values. Then the biomarker with highest individual EHUM value is included. Afterwards, sequentially one biomarker is included at a time and their coefficients are calculated solving univariate optimization one at a time. A brief description of step-down algorithm is provided in Section D of the appendix. Similar technique can also be used to optimize ULBA criteria as well. In short, Step-down alleviates the computational burden of solving a multivariate optimization problem by breaking it into sequence of univariate optimization problems. We use RMPS algorithm to maximize EHUM and ULBA criteria. Note that, $D_E(\boldsymbol{\beta})$ and $P_A(\boldsymbol{\beta})$ are not identifiable. Therefore, while optimizing using RMPS, we set the value of the first biomarker to be 1 or -1 (based on which coefficient results in higher individual EHUM or ULBA value) and then rest $d-1$ biomarkers are estimated. We consider the upper and lower bounds of the domain of the possible values of the biomarker coefficients to be 10 and -10 for all coordinates respectively. 

\begin{figure}[ht]
	\centering
	\includegraphics[width=0.99\textwidth]{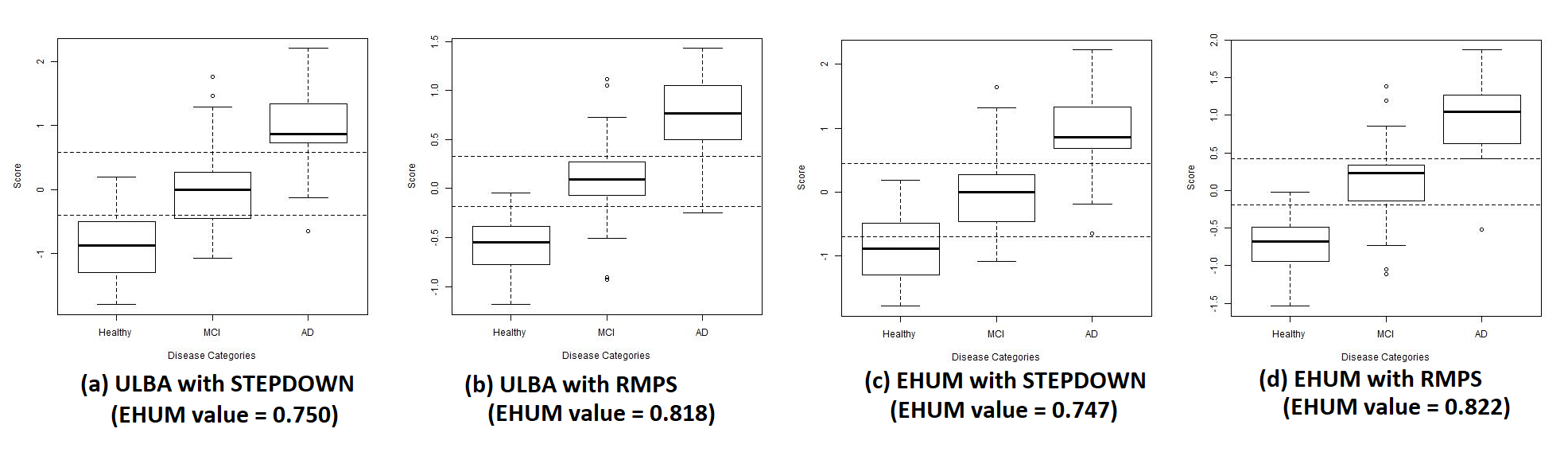}
	\caption{Based on Alzheimer's dataset, Upper and Lower Bound Approach (ULBA) ($P_A(\boldsymbol{\beta})$) and Empirical Hypervolume Under Manifol (EHUM) ($D_E(\boldsymbol{\beta})$) objective functions are maximized using Stepdown and RMPS algorithms. The boxplot of the scores of the true classes are shown. The dotted lines represent the Youden's Index calculated from the dataset. EHUM objective function values are also noted down at the calculated for the optimal biomarker combination vector obtained for each of the 4 cases. }
	\label{ehum}
\end{figure}

We consider a dataset (\cite{Luo2012}) on 14 biomarkers (neuropsychometric tests) whose values are available 108 Alzheimer's patients belonging to 3 different stages of the disease. 14 markers are given as follows: global (\textit{factor1}), temporal  (\textit{ktemp}), parietal  (\textit{kpar}), frontal  (\textit{kfront}), logical memory (\textit{zpsy004}), digital span forward (\textit{zpsy005}), digital span backward (\textit{zpsy006}), information (\textit{zinfo}), two measures of visual retention (\textit{zbentc}, \textit{zbentd}), Boston naming (\textit{zboston}), mental control (\textit{zmentcon}), word fluency (\textit{zworflu}), and associate learning (\textit{zassc}). Due to higher correlation among  \textit{factor1}, \textit{ktemp} and \textit{zpsy004} (\cite{Luo2012}), we only consider \textit{ktemp} and drop other two biomarkers among these three biomarkers, with a total of 12 biomarkers.

We maximize both ULBA and EHUM objective functions using step-down and RMPS algorithms. Stepdown algorithm is coded in MATLAB and we use \texttt{fminsearch} function of solving the sequence of univariate optimization problems. RMPS is coded in MATLAB as well. In Figure \ref{ehum} we provide the boxplots of the linear scores obtained for three different stages of Alzheimer's patients in each scenario. EHUM objective function values are also noted down correspoding to the obtained optimal biomarker combination vector for considered methods. It is noted that the EHUM values obtained for optimal combination vector by maximizing $D_E(\boldsymbol{\beta})$ and $P_A(\boldsymbol{\beta})$  using RMPS are higher than that of Step-down for both cases. It is also noted that the classification of the linear scores with Youden's Index (\cite{Youden1950}), yields comparatively more non-overlapping cluster using (combination vectors estimated using) RMPS compared to that obtained from stepdown technique. 

\section{Discussion}
\label{sec_discussion}
In this paper we propose a derivative-free blackbox optimization technique using modified pattern search for optimizing any function over hyper-rectangular domain. RMPS is shown to perform upto 40 times and 368 times faster compared to GA and SA respectively. Along with MATLAB code \href{https://github.com/priyamdas2/RMPS}{\textbf{here}}, the R package \texttt{RMPSH} (\cite{Das2020b}) is available on R CRAN. The novel features of the proposed algorithm are as follows:
\begin{itemize}
    \item RMPS is derivative-free, which is crucial for optimizing any discontinuous, complex structured function with intractable derivative values.
    \item While optimizing $n$ dimensional blackbox function, upto $2n$ parallel threads can be used for computation. In Table S4 of the appendix, based on a case study, it is shown how usage of parallel computing can make RMPS even faster when the objective function is computationally very expensive.
    \item Theoretically RMPS is shown to yield the global solution over the compact space if the objective function is locally convex and differentiable over that space.
    \item RMPS is shown to be well salable for higher dimensional (upto 5000 dimensional problem) optimization problems and it yields better results in less computational time compared to other blackbox optimization techniques, in general.
    \item By simulation experiments it is shown that the performance of RMPS is, in general, equally good for all considered starting points and it's performance does not depend a lot on the starting point (shown for upto 5000 dimensional problems, see Table \ref{table_high_dim} and \ref{table_5000}).
    \item RMPS is shown to perform well even when the true solution is a boundary point. 
    \item When the objective function is known to be convex beforehand, RMPS can be made further scalable by changing the values of a coupe of tuning parameters.
\end{itemize}

Using RMPS we estimate the optimal coefficients for combining the neuropsychometric test results from the subjects with Alzheimer's disease to classify them into three different stages of the disease. RMPS is used to maximize the estimates of HUM which are discontinuous and (possibly) multi-modal in nature. It is shown that using RMPS better classification can be performed compared to existing step-sown algorithm. In order to praise the ability of RMPS to use parallel threads, we consider another case study where matrix completion is performed with Smoothly Clipped Absolute Deviation (SCAD) penalty (see Section D, Figure S3, S4 of the appendix). Due to very expensive nature of the objective function, benefits of using parallel computing is more visible for that case study (see Table S4 of the appendix). 

Variations of RMPS algorithm has been already used successfully on several articles (e.g., \cite{Das2023,Das2023b,Das2022,Das2021,Das2017b}) for solving high-dimensional complex optimization problems arising in different fields of statistics. In future, the idea of RMPS can be extended for optimizing blackbox functions on several possible constrained spaces e.g., over the space of positive definite matrices, orthonormal matrices.

\begin{acknowledgements}
I would really like to thank Dr. Rudrodip Majumdar, Dr. Debraj Das and Dr. Subhashis Ghoshal for helping me with their valuable comments to improve the first version of this draft.
\end{acknowledgements}

\section*{Conflict of interest}
The authors declare that they have no conflict of interest.

\bibliographystyle{spmpsci}      
\bibliography{bibli}   

%
%

\newpage
\appendix  

\setcounter{table}{0}
\renewcommand{\thetable}{S\arabic{table}}
\setcounter{figure}{0}
\renewcommand{\thefigure}{S\arabic{figure}}

\section*{Appendix A : RMPS and Generalized Pattern Search}
\label{comparison_GPS}
Here we discuss some of the key features of RMPS along with the differences of RMPS with a few existing Pattern Search based optimization techniques (e.g., \cite{Torczon1997}). Firstly, the restart strategy with smaller step-size decay rate is something which is possibly proposed for the first time in the context of Pattern search, to the best of our knowledge. Secondly, unlike algorithm 1 of \cite{Torczon1997}, instead of unconstrained minimization, the proposed algorithm minimizes the black-box function on a hyper-rectangle. In \cite{Fermi1952} and \cite{Torczon1997}, the coordinate-wise jump sizes were kept equal inside an iteration while in the proposed algorithm, the domain of each coordinate being bounded, in every iteration, local-step sizes are modified separately for each coordinates in each direction (positive and negative), as required. In GPS, each coordinate-wise jump step-sizes are evaluated using `exploratory moves algorithm' (see \cite{Torczon1997}) while in the proposed algorithm it is straightforward and does not use `exploratory moves algorithm'. While optimizing a function on a hyper-rectangle, since the domain is transformed into an unit hyper-cube, the global step-size is kept same for each coordinate. So while determining the step-sizes of coordinate-wise movements, the proposed algorithm uses different strategy than the `exploratory moves algorithm'. The most unique feature of the proposed algorithm is the restart strategy as described in the main paper, in details.

\section*{Appendix B : Tuning parameters and their roles}
\label{tuning}
In this section, a brief description of the tuning parameters and their roles are provided which are as follows.
\begin{description}
	\item[$\bullet$ \textit{step decay rate} ($\rho$) :] $\rho$ determines the rate of change of \textit{global step size} at the end of each iteration. . So it is understandable that the value of $\rho$ must be greater than 1. Taking smaller values of $\rho$ will make the decay of step sizes slower, which would allow finer search within the domain at the cost of more computation time.  
    
	\item[$\bullet$ \textit{step size threshold} ($\phi$) :] $\phi$ controls the precision of the solution. This is the minimum possible value that the \textit{global step size} and the \textit{local step sizes} could take. Once the \textit{global step size} reaches below $\phi$, the \textit{run} is terminated. Taking $\phi$ to be smaller results in better precision in the cost of higher computation time.

	\item[$\bullet$ $\textit{tol\_fun}$ :] $\textit{tol\_fun}$ denotes the minimum amount of improvement after an iteration in the solution which is required to keep the value of \textit{global step size} unchanged. In other words, if the differences of solutions obtained in two consecutive iterations is less than \textit{tol\_fun}, the improvement is considered to be `not significant' and in that scenario, the \textit{global step size} is decreased for next iteration for employing a finer search.
    
	\item[$\bullet$ $\textit{tol\_fun\_2}$ :] The second \textit{run} onwards, whenever a \textit{run} terminates, it is checked whether the solution returned by the current \textit{run} is the same or different with the solution returned by the previous \textit{run}. However, to check whether they are exactly equal, they need to be matched up to several decimal places depending on the type of storage variable used by the software. Thus it might result into performance of a lot of extra \textit{runs} just to improve the solution at distant decimal places which might be not of our interest. Therefore, once the euclidean distance of solution points obtained from two consecutive \textit{runs} become less than $\textit{tol\_fun\_2}$, the algorithm is terminated and the final result is returned.
\end{description}

\section*{Appendix C : Additional comparative performance study of RMPS}
The domains of the optimization problems considered in Table 1 of the main paper is provided in Table \ref{domains}. Figures \ref{theory_1} and \ref{theory_2} can be used as an aid for visualization corresponding to the theoretical properties of RMPS shown in Section 3 of the main paper. 
\subsection*{Exploiting convexity}
\label{sec_convex}
The prior knowledge of convexity can be used to save computation time using RMPS. In order to improve computation time for minimizing convex functions, we also consider RMPS taking $\rho=4$ which employs steeper decrease in \textit{global step size} and \textit{local step sizes}. In Table \ref{table_convex}, a comparison study of performances of RMPS, and changed RMPS with the prior knowledge of convexity (RMPS(c), (i.e., \textit{max\_runs}=1, $\rho=4$ with default values of other parameters), GA and SA are provided for minimizing Sphere and Sum squares function for various dimensions starting from 10 randomly generated starting points (under 10 random number generating seeds in MATLAB) in each cases. It is noted that in each cases, RMPS(c) performs faster than RMPS. Also in terms of computation times, using RMPS(c) up to 40 folds improvement is observed compared to GA and up to 92 folds improvement is observed compared to SA. 

In Table \ref{table_high_dim_boundary} we note down the comparative performance results of RMPS, GA and SA when the true solution lies at a boundary point of the domain. 
 
\begin{table}[H]
	\centering
	\resizebox{0.8\columnwidth}{!}{%
		\begin{tabular}{@{}lcc@{}}
			\hline
			\multicolumn{1}{c}{Function names} & Domain & True minimum \\ \hline
			Ackley Function & $[-32.768, 32.768]^2$ & 0 \\
			Bukin Function N6 & $[-15,-5]\times[-3,3]$ & 0 \\
			Cross-in-Tray Function & $[-10, 10]^2$ & -2.0626 \\
			Drop-Wave Function & $[-5.12, 5.12]^2$ & -1 \\
			Eggholder Function & $[-512, 512]^2$ & -959.6407 \\
			Gramacy \& Lee (2012) Function (n=1) & $[0.5,2.5]$ & unknown \\
			Griewank Function & $[-600, 600]^2$ & 0 \\
			Holder Table Function & $[-10, 10]^2$ & -19.2085 \\
			Langermann Function & $[0,10]^2$ & unknown \\
			Levy Function & $[-10, 10]^2$ & 0 \\
			Levy Function N 13 & $[-10, 10]^2$ & 0 \\
			Rastrigin Function & $[-5.12, 5.12]^2$ & 0 \\
			Schaffer Function N2 & $[-100,100]^2$ & 0 \\
			Schaffer Function N4 & $[-100,100]^2$ & 0.2926 \\
			Schwefel Function & $[-500,500]^2$ & 0 \\
			Shubert Function & $[-5.12, 5.12]^2$ & -186.7309 \\
			Bohachevsky Functions 1 & $[-100,100]^2$ & 0 \\
			Bohachevsky Functions 2 & $[-100,100]^2$ & 0 \\
			Bohachevsky Functions 3 & $[-100,100]^2$ & 0 \\
			Perm Function 0, $d(=2)$, $\beta(=10)$ & $[-2, 2]^2$ & 0 \\
			Rotated Hyper-Ellipsoid Function & $[-65.536, 65.536]^2$ & 0 \\
			Sphere Function & $[-5.12, 5.12]^2$ & 0 \\
			Sum of Different Powers Function & $[-1, 1]^2$ & 0 \\
			Sum Squares Function & $[-5.12, 5.12]^2$ & 0 \\
			Trid Function & $[-4, 4]^2$ & unknown \\
			Booth Function & $[-10, 10]^2$ & 0 \\
			Matyas Function & $[-10, 10]^2$ & 0 \\
			McCormick Function & $[-1.5,4]\times[-3,4]$ &  \\
			Power Sum Function $(n=4)$ & $[0,4]^4$ & unknown \\
			Zakharov Function & $[-5, 10]^2$ & 0 \\
			Three-Hump Camel Function & $[-5, 5]^2$ & 0 \\
			Six-Hump Camel Function & $[-3,3]\times[-2,2]$ & -1.0316 \\
			Dixon-Price Function & $[-10, 10]^2$ & 0 \\
			Rosenbrock Function & $[-5, 10]^2$ & 0 \\
			De Jong Function N5 & $[-65.536, 65.536]^2$ & unknown \\
			Easom Function & $[-100,100]^2$ & -1 \\
			Michalewicz Function & $[0, \pi]^2$ & -1.8013 \\
			Beale Function & $[-4.5, 4.5]^2$ & 0 \\
			Branin Function & $[-5,10]\times[0,15]$ & 0.397887 \\
			Colville Function $(n=4)$ & $[-10, 10]^4$ & 0 \\
			Forrester et al. (2008) Function & $[0,1]$ & unknown \\
			Goldstein-Price Function & $[-2, 2]^2$ & 3 \\
			Perm Function $n(=2)$,$\beta(=0.5)$ & $[-2, 2]^2$ & 0 \\
			Powell Function $(n=4)$ & $[-4,5]^4$ & 0 \\
			Styblinski-Tang Function & $[-5, 5]^2$ & -78.3323 \\ \hline
	\end{tabular}}
	\caption{Domains of search regions of the benchmark functions considered in Section 4 in the main paper.}
	\label{domains}
\end{table}

	\begin{figure}[H]
		\centering
		\begin{minipage}[b]{0.45\textwidth}
			\includegraphics[width=\textwidth]{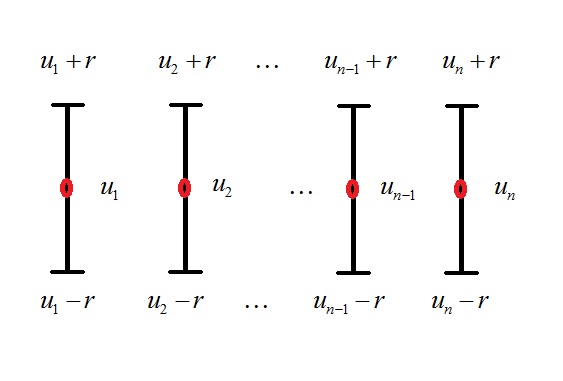}
			\caption{$\mathbf{U} = \prod_{i=1}^n(u_i-r,u_i+r)$.}
			\label{theory_1}
		\end{minipage}
		\begin{minipage}[b]{0.45\textwidth}
			\includegraphics[width=\textwidth]{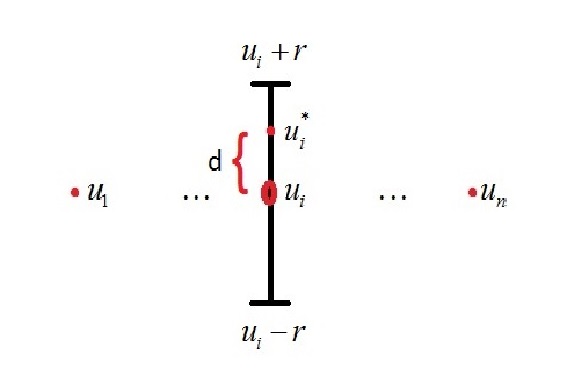}
			\caption{Neighborhood of $u_i$.}
			\label{theory_2}
		\end{minipage}
	\end{figure}

\begin{table}[H]
	\centering
	\resizebox{0.99\columnwidth}{!}{%
		\begin{tabular}{@{}lcccccccc@{}}
			\hline
			Functions & \begin{tabular}[c]{@{}c@{}}RMPS(c)\\ (min value)\end{tabular} & \begin{tabular}[c]{@{}c@{}}RMPS\\ (max value)\end{tabular} & \begin{tabular}[c]{@{}c@{}}GA\\ (min value)\end{tabular} & \begin{tabular}[c]{@{}c@{}}SA\\ (min value)\end{tabular} & \begin{tabular}[c]{@{}c@{}}RMPS(c)\\ (avg. time)\end{tabular} & \begin{tabular}[c]{@{}c@{}}RMPS\\ (avg. time)\end{tabular} & \begin{tabular}[c]{@{}c@{}}GA\\ (avg. time)\end{tabular} & \begin{tabular}[c]{@{}c@{}}SA\\ (avg. time)\end{tabular} \\ \hline
			\begin{tabular}[c]{@{}l@{}}Sphere \\ Function $(n=4)$\end{tabular} & 1.02E-11 & 6.55E-11 & 1.03E-10 & 1.41E-04 & 0.012 & 0.03 & 0.207 & 0.945 \\
			\begin{tabular}[c]{@{}l@{}}Sphere \\ Function $(n=20)$\end{tabular} & 1.17E-10 & 2.22E-10 & 1.22E-06 & 4.12E-00 & 0.054 & 0.116 & 2.363 & 5.462 \\
			\begin{tabular}[c]{@{}l@{}}Sphere \\ Function $(n=40)$\end{tabular} & 2.56E-10 & 3.67E-10 & 2.09E-05 & 2.00E+01 & 0.186 & 0.299 & 6.07 & 9.301 \\
			\begin{tabular}[c]{@{}l@{}}Sphere \\ Function $(n=100)$\end{tabular} & 6.93E-10 & 8.91E-10 & 4.49E-04 & 8.02E+01 & 1.07 & 1.393 & 45.923 & 47.568 \\
			\begin{tabular}[c]{@{}l@{}}Sum squares\\ Function $(n=4)$\end{tabular} & 1.66E-11 & 1.91E-10 & 5.39E-10 & 1.21E-04 & 0.013 & 0.038 & 0.219 & 0.885 \\
			\begin{tabular}[c]{@{}l@{}}Sum squares \\ Function $(n=20)$\end{tabular} & 1.19E-09 & 2.20E-09 & 1.63E-06 & 5.79E-01 & 0.076 & 0.172 & 3.085 & 4.487 \\
			\begin{tabular}[c]{@{}l@{}}Sum squares \\ Function $(n=40)$\end{tabular} & 5.44E-09 & 7.49E-09 & 3.71E-05 & 7.45E-01 & 0.297 & 0.457 & 8.552 & 13.083 \\
			\begin{tabular}[c]{@{}l@{}}Sum squares \\ Function $(n=100)$\end{tabular} & 3.45E-08 & 4.62E-08 & 3.83E-04 & 1.63E-00 & 1.763 & 2.221 & 75.519 & 92.489 \\ \hline
	\end{tabular}}
	\caption{Comparative study of RMPS, RMPS(c)(i.e., \textit{max\_runs}=1, $\rho=4$ with default values of other parameters), GA and SA for solving convex problems Sphere and Sum squares functions for $d=4,20,40,100$ are minimized using RMPS, GA and SA starting from 10 randomly generated starting points from the corresponding domains. The minimum of the 10 obtained minimum values of the objective functions is noted down for each method. For RMPS, also the maximum of these 10 obtained minimum objective function values is noted down as well. Average computation times over 10 starting point scenarios for each method is noted down (in seconds).}
	\label{table_convex}
\end{table}

\begin{table}[H]
	\centering
	\resizebox{0.99\columnwidth}{!}{%
		\begin{tabular}{@{}lccccccc@{}}
			\hline
			Functions & \begin{tabular}[c]{@{}c@{}}RMPS\\ (min)\end{tabular} & \begin{tabular}[c]{@{}c@{}}RMPS\\ (max)\end{tabular} & \begin{tabular}[c]{@{}c@{}}GA\\ (min)\end{tabular} & \begin{tabular}[c]{@{}c@{}}SA\\ (min)\end{tabular} & \begin{tabular}[c]{@{}c@{}}RMPS\\ (avg. time)\end{tabular} & \begin{tabular}[c]{@{}c@{}}GA\\ (avg. time)\end{tabular} & \begin{tabular}[c]{@{}c@{}}SA\\ (avg. time)\end{tabular} \\ \hline
			Ackley's ($n =100$) & 8.55E-06 & 1.16E-05 & 2.29E-00 & 8.75E-00 & 4.8 & 39.5 & 40.0 \\
			Griewank ($n =100$) & 5.24E-11 & 1.23E-02 & 3.28E-04 & 1.46E-00 & 5.6 & 50.2 & 45.0 \\
			Rastrigin ($n =100$) & 4.79E-08 & 9.29E-08 & 2.19E+01 & 6.81E+00 & 3.4 & 56.4 & 64.0 \\
			Schwefel ($n =100$) & 1.27E-03 & 1.27E-03 & 7.36+03 & 1.91+04 & 5.3 & 26.0 & 87.5 \\
			Sphere ($n =100$) & 7.32E-10 & 8.76E-10 & 5.22E-04 & 1.72+02 & 2.5 & 72.4 & 55.8 \\
			Sum squares ($n =100$) & 3.47E-08 & 4.58E-08 & 1.30E-03 & 2.20 + 01 & 2.7 & 108.0 & 66.7 \\
			Ackley's ($n =1000$) & 9.8E-06 & 1.12E-05 & 5.73E-00 & 9.98E-00 & 557.1 & 6079.5 & 4498.3 \\
			Griewank ($n =1000$) & 9.33E-11 & 1.52E-10 & 4.47E-00 & 7.25E-00 & 643.5 & 203.6 & 6644.6 \\
			Rastrigin ($n =1000$) & 6.77E-07 & 7.83E-07 & 4.14E+03 & 9.03E+03 & 528.5 & 6179.8 & 5673.5 \\
			Schwefel ($n =1000$) & 1.27E-02 & 1.27E-02 & 1.57E+05 & 1.99E+05 & 739.5 & 10898.0 & 10826.0 \\
			Sphere ($n =1000$) & 7.7E-09 & 8.20E-09 & 3.48E+03 & 3.00E+03 & 238.3 & 321.0 & 6172.5 \\
			Sum squares ($n =1000$) & 3.8E-06 & 4.12E-06 & 1.20E+06 & 1.21E+02 & 280.7 & 534.1 & 21576.0 \\ \hline
	\end{tabular}}
	\caption{Comparative study of RMPS, GA and SA for minimizing 100 and 1000 dimensional Ackley, Griewank, Rastrigin, Schwefel, Sphere and Sum squares functions where the true solution is a boundary point. Benchmark functions are minimized starting from 10 randomly generated starting points from the corresponding domains using RMPS, GA and SA. The minimum of the 10 obtained minimum values of the objective functions is noted down for each method. For RMPS, also the maximum of these 10 obtained minimum objective function values is noted down as well. Average computation times over 10 starting point scenarios for each method is noted down (in seconds). RMPS outperforms other methods and the closeness of the min. and max. of the 10 obtained function minimum values (by RMPS, corresponding to 10 different starting points) implies the performance of RMPS is less dependent on the starting point, in general.}
	\label{table_high_dim_boundary}
\end{table}

\section*{Appendix D : Optimization on Hypervolume under manifolds}
\paragraph{Upper and Lower Bound Approach (ULBA) :}
\cite{Hsu2016} showed the following equality holds true
$$\max\{0, (H-1)P_{A}(\boldsymbol{\beta}) - (H-2)\} \le D(\boldsymbol{\beta}) \le P_{H}(\boldsymbol{\beta}),$$
where $P_{A}(\boldsymbol{\beta})$ and $P_{H}(\boldsymbol{\beta})$ are defined by
$$P_{A}(\boldsymbol{\beta}) =\frac{1}{H-1} \displaystyle\sum_{j=1}^{H-1}P(\boldsymbol{\beta}^{T}\mathbf{X}_{j+1}>\boldsymbol{\beta}^{T}\mathbf{X}_{j}), P_{H}(\boldsymbol{\beta}) = \min_{1 \le j \le H-1} P(\boldsymbol{\beta}^{T}\mathbf{X}_{j+1}>\boldsymbol{\beta}^{T}\mathbf{X}_{j}).$$ They proposed to maximize $P_{A}(\boldsymbol{\beta})$ in order to obtain the optimal biomarker combination. Compared to EHUM, ULBA objective function is much less expensive to compute. However, rest of the challenges for maximizing EHUM (e.g., discontinuity, possible multi-modal nature) also applies here as well.

\paragraph{Step-down algorithm :}
The step-down algorithm for maximizing any given objective function is given as follows:
\begin{description}
 \item[{\bf Step 1.}] EHUM values of the individual biomarkers are computed and based on their individual EHUM values, they are arranged in decreasing order. Suppose $X_{(1)}$ and $X_{(d)}$ denote the biomarkers with the highest and the lowest individual EHUM values, respectively.
 \item[{\bf Step 2.}] The first two biomarkers with the highest EHUM values are taken and combined as $V_2=X_{(1)} + \lambda_2 X_{(2)}$, where $\lambda_2$ is a parameter that needs to be estimated.
\item[{\bf Step 3.}] The objective function for the combined biomarker $V_2$ is maximized with respect to $\lambda_2$. Let $\widehat{V}_2 = X_{(1)} + \widehat{\lambda}_2 X_{(2)}$ denote the updated combination vector.
 \item[{\bf Step 4.}] For $i = 3,\ldots,d$ define $V_i = \widehat{V}_{i-1}+\lambda_i X_{(i)}$ and maximize the objective function with respect to $\lambda_i$.  The  combination vector obtained at $i$-th step is given by $\widehat{\lambda}_i$.
\end{description}
The estimated optimal marker using step-down algorithm is given by  $\widehat{V}_d = X_{(1)} + \widehat{\lambda}_2 X_{(2)} + \cdots + \widehat{\lambda}_d X_{(d)}$.

\section*{Appendix E : On parallel computation with RMPS : Matrix Completion Problem with SCAD penalty}
RMPS is parallelizable and up to $2n$ parallel threads can be used while solving a $n$-dimensional black-box problem. However in the simulation study part, the time required for optimizing each function is noted down for single thread computing only. To perform parallel computing in MATLAB, in case \texttt{parfor} loop is used instead of \texttt{for} loop, depending on the operations performed within the loops (e.g., objective function evaluation), a scenario might arise where \texttt{for} loop works faster than \texttt{parfor} loop. Because at the beginning of the \texttt{parfor} loop, an amount of time is spent in distributing the parallelizable works to different workers and after termination of parallel jobs, some amount of time is spent for gathering the results. However no such additional time is spent when job is done in single thread (via \texttt{for} loop). So, in case the objective function is not computationally expensive enough compared to the amount of time required for distribution and collection of results in parallel computing (using \texttt{parfor} loop), it is possible for the multi-threaded job to be more time consuming compared to the single-thread computation. We note that for all the considered simulation experiments, time required using single thread computation is faster than using parallel computation. Therefore, all the results regarding computation times are noted down for single threaded computing only. However it might not be the case if the objective function is actually computationally very expensive. In order to show the benefit of using parallel computation with RMPS we consider the following case study.

The problem of recovering an unknown matrix from only a given fraction of its entries is known as matrix completion problem. \cite{Candes2009} first proposed a method to recover a matrix from a few given entries solving a convex optimization problem. Later to solve this problem, \cite{Candes2010} minimized nuclear norm of the matrix subject to the constraint that the given entries of the matrix should be the same. In other words, suppose we have a matrix $\mathbf{Y}=(y_{ij})_{n \times n}$ with some missing values. In that case, as mentioned in \cite{Candes2010}, the complete matrix $\mathbf{Y}$ can be obtained by solving the following problem,
\begin{align*}
	&\text{minimize :} \; ||\mathbf{X}||_* \\
	&\text{subject to :} \; x_{ij}=y_{ij} \;\text{for all observed} \; (i,j),
\end{align*}
where $||\mathbf{M}||_*=\sum_i\sigma_i(\mathbf{M})$ denotes the nuclear norm, $\sigma_i(\mathbf{M})$ being the $i$-th singular value of matrix $\mathbf{M}$. This problem can be solved using convex optimization technique. On a closer look, it can be noted that minimizing nuclear norm in this fashion in similar to the LASSO (\cite{Tibshirani1996}) penalty term. \cite{Fan2001} proposed Smoothly Clipped Absolute Deviation (SCAD) penalty which was shown to have more desirable properties compared to LASSO for solving shrinkage based variable selection problems. But unlike LASSO, SCAD penalty is not a convex minimization (or concave maximization) problem. In this section, the matrix completion problem is solved using the SCAD penalty with RMPS. The matrix completion problem using SCAD penalty can be re-formulated as
\begin{align}
	&\text{minimize :} \; \sum_i f(\sigma_i(\mathbf{X})) \nonumber\\
	&\text{subject to :} \; x_{ij}=y_{ij} \;\text{for all observed} \; (i,j),
	\label{scad}
\end{align}
where $\sigma_i(\mathbf{X})$ are singular values and $f_i$ is the SCAD penalty function dependent of tuning parameters $\lambda$ and $a\;(=3.7)$ (see \cite{Fan2001}).

We consider a picture (Figure \ref{fig:GIVEN_PIC}) with $61\times 61$ pixels where approximately half (1877 to be precise) of its pixels are missing. The problem given by \eqref{scad} can been seen as a black-box function of dimension of 1877 (i.e., the number of missing pixels). It is also known that the numerical value of grey level of each pixel must be between 0 and 255. This problem is solved using RMPS method. We fit the model for 30 values of $\lambda$ which are $\{100,200, \ldots, 3000\}$ and only the best visual output ($\lambda=900$) is reported in Figure \ref{fig:SCAD_FINAL_POINT_900}.

\begin{figure}
	\begin{minipage}[b]{0.5\linewidth}
		\centering
		\includegraphics[width=0.6\linewidth]{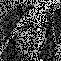} 
		\caption{Original picture containing missing pixels.} 
		\label{fig:GIVEN_PIC} 
	\end{minipage}
	\begin{minipage}[b]{0.5\linewidth}
		\centering
		\includegraphics[width=0.6\linewidth]{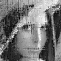} 
		\caption{Completed picture using SCAD penalized matrix completion with RMPS.} 
		\label{fig:SCAD_FINAL_POINT_900} 
	\end{minipage}
	\label{figfig6a} 
\end{figure}

Unlike the objective functions considered in the performance evaluation studies in the main paper, the evaluation of SCAD penalty based on the singular values of the matrix is very computationally intense. Thus, unlike previous cases, here using parallel computing is noted to be beneficial. It should be noted that this is a 1877 dimensional problem and therefore up to 3754 parallel threads can be used while solving it using RMPS algorithm. We use 4 parallel threads to derive the complete image given in Figure \ref{fig:SCAD_FINAL_POINT_900}. For comparison of computation time required by single threading and parallel threading with 4 threads, the required computation times for first 50, 100 and 200 iterations are provided for all cases in Table \ref{parallel}. We get more than 3 folds improvement in computation time using parallel threading (with 4 threads) instead of single threading.

\begin{table}[h]
	\centering
	\begin{tabular}{@{}cccc@{}}\hline
		Iterations & \begin{tabular}[c]{@{}c@{}}1 thread\\ (time)\end{tabular} & \begin{tabular}[c]{@{}c@{}}4 threads\\ (time)\end{tabular} & \begin{tabular}[c]{@{}c@{}}Improvement\\ (folds)\end{tabular} \\ \hline
		50 & 1325.93 & 391.38 & 3.39 \\
		100 & 3189.97 & 881.75 & 3.62 \\
		200 & 7622.24 & 2162.56 & 3.52 \\ \hline
	\end{tabular}
	\caption{Computation times (in seconds) required for first 50, 100 and 200 iterations of RMPS while solving matrix completion problem with SCAD penalty using single thread and 4 parallel threads.}
	\label{parallel}
\end{table}

\end{document}